\let\oldlabel=\label
\def\prellabel{\marginparsep=1em
    \def\label##1{\oldlabel{##1}\ifmmode\else\ifinner\else
         \marginpar{{\footnotesize\ \\ \tt
                    ##1}}\fi\fi}}
\def\height{\operatorname{ht}}
\def\D{\operatorname{D}}
\def\Im{\operatorname{Im}}
\def\inte{\operatorname{int}}
\def\Hilb{\operatorname{Hilb}}
\def\vol{\operatorname{vol}}
\def\Aff{\operatorname{Aff}}
\def\unim{\operatorname{Unim}}
\def\Lat{\mathsf{L}}
\def\para{\mathsf{par}}
\def\Lpara{\mathsf{Lpar}}
\def\e{\mathbf{e}}
\def\np{{\operatorname{NPol}}}
\def\pol{{\operatorname{Pol}}}
\def\cones{\operatorname{Cones}}
\def\RR{\mathbb R}
\def\ZZ{\mathbb Z}
\def\NN{\mathbb N}
\def\FF{\mathbb F}
\let\phi=\varphi
\let\epsilon=\varepsilon
\newtheorem{lemma}{Lemma}[section]  \newtheorem{theorem}[lemma]{Theorem}
\newtheorem{proposition}[lemma]{Proposition} \newtheorem{conjecture}[lemma]{Conjecture}
\theoremstyle{definition}  \newtheorem{remark}[lemma]{Remark}
\newtheorem{example}[lemma]{Example} \newtheorem{question}[lemma]{Question}
\begin{document}

\title[The poset of rational cones]{The poset of rational cones}

\author{Joseph Gubeladze}
\author{Mateusz Micha$\l$ek}

\address{Department of Mathematics\\
         San Francisco State University\\
         1600 Holloway Ave.\\
         San Francisco, CA 94132, USA}
\email{soso@sfsu.edu}

\address{
Freie Universit\"at\\
 Arnimallee 3\\
 14195 Berlin, Germany\newline
Polish Academy of Sciences\\
         ul. \'Sniadeckich 8\\
         00-956 Warsaw\\
         Poland\newline
MPI MiS, Inselstr. 22,  Leipzig, Germany}
\email{wajcha2@poczta.onet.pl}

\thanks{Gubeladze was supported by NSF grant  DMS 1301487}

\thanks{Michalek was supported by Polish National Science Center grant
2013/08/A/ST1/00804 and Foundation for Polish Science (FNP)}
\subjclass[2010]{Primary 52B20, Secondary 20M13}

\keywords{Rational cone, poset of cones, geometric realization of a poset}

\begin{abstract} 
We introduce a natural partial order on the set $\cones(d)$ of rational cones in $\RR^d$. The poset of normal polytopes, studied in \cite{Jumps}, embeds into $\cones(d)$ via the homogenization map. The order in $\cones(d)$ is conjecturally the inclusion order. We prove this for $d=3$ and show a stronger version of the connectivity of $\cones(d)$ for all $d$. Topological aspects of the conjecture are also discussed.  
\end{abstract}

\maketitle

\section{Introduction}\label{INTRO}
Rational cones in $\RR^d$ are important objects in toric algebraic geometry, combinatorial commutative algebra, geometric combinatorics, integer programming \cite{Beck,Kripo,Cox,Miller-Sturmfels,Schrijver}. The interaction of these convex objects with the integer lattice $\mathbb Z^d$ is governed by their \emph{Hilbert bases} -- the finite sets of indecomposable elements, notoriously difficult to characterize. General results on Hilbert bases are available only in low dimensions, e.g., \cite{Aguzzoli,Bouvier,SebHILB}. In higher dimensions there are mostly counterexamples to conjectures, e.g. \cite{caratex2,Unico,Caratex}. In this paper we introduce a partial order on the set of rational cones in $\RR^d$. It is defined in terms of the additive generation of the sets of lattice points in cones. The resulting poset $\cones(d)$ is a structure in its own right, which captures a global picture of the interaction of $\ZZ^d$ with all cones at once. The poset $\np(d-1)$ of \emph{normal polytopes} in $\RR^{d-1}$, introduced in \cite{Jumps}, monotonically embeds into $\cones(d)$ via the \emph{homogenization} map. But the former poset is much more difficult to analyze than $\cones(d)$. In fact, there are maximal and nontrivial minimal normal polytopes; at present even the presence of isolated normal polytopes is not excluded \cite{Jumps}. On the other extreme, we conjecture that the order in $\cones(d)$ is just the inclusion order (Conjecture \ref{conjecture-additive}).  We prove the 3-dimensional case of the conjecture (Theorem \ref{dim-3}) and a stronger version of the connectivity of $\cones(d)$ for all $d$: any two cones can be connected by a sequence of $O(d)$ many elementary extensions/descents, or $O(d^2)$ many such moves if working with the full-dimensional cones (Theorem \ref{diameter}). In Section \ref{Topology} we consider topological consequences of Conjecture \ref{conjecture-additive}.

\medskip\noindent\emph{Acknowledgment.} We are grateful to Andreas Paffenholz for carrying out computational experiments, discussed in Section \ref{Cone-conjecture}.

\subsection{Cones}\label{Basics}
We consider the real vector space $\RR^d$, consisting of $d$-columns, together with the integer  lattice $\ZZ^d$. The standard basis vectors will be denoted by $\e_1,\ldots,\e_d$, the set of non-negative reals will be denoted by $\RR_+$, and that of non-negative integers will be denoed by $\ZZ_+$.

For a subset $X\subset\RR^d$ its \emph{conical hull}, i.e., the set of nonnegative linear combinations of elements of $X$, is denoted by $\RR_+X$. The linear span of $X$ will be denoted by $\RR X$. We also put $\Lat(X)=X\cap\ZZ^d$.

By a \emph{cone} $C$ we always mean a pointed, rational, polydedral cone, i.e., $C=\RR_+x_1+\cdots+\RR_+x_n$ for some $x_1,\ldots,x_n\in\ZZ^d$ and there is no nonzero element $x\in C$ with $-x\in C$. Let $C\subset\RR^d$ be a nonzero cone. Then there exists an affine hyperplane $H$, meeting $C$ \emph{transversally}, i.e., such that $C\cap H$ is a polytope of dimension $\dim(C)-1$ \cite[Proposition 1.21]{Kripo}. The first nonzero lattice point on each 1-dimensional face of $C$ is called an \emph{extremal generator of} $C$. The additive submonoid $\Lat(C)\subset\ZZ^d$ has the smallest generating set, consisting of indecomposable elements. It is called the \emph{Hilbert basis} of $C$, denoted by $\Hilb(C)$. The extremal generators of $C$ belong to $\Hilb(C)$.

A $d$-cone $C\subset\RR^d$ has a unique minimal representation as an intersection of closed half-spaces $C=\bigcap_{j=1}^n H_j^+$. The boundary hyperplanes $H_j\subset H_j^+$ intersect $C$ in its  \emph{facets}, i.e., the codimension 1 proper faces of $C$. Further, for each facet $F\subset C$ there exists a unique linear function $\height_F:\RR^d\rightarrow\RR$ which vanishes on $F$, is non-negative on $C$, and satisfies $\height_F(\ZZ^d)=\ZZ$.

A pair of cones $(C,D)$  is a \emph{unimodular extension} of cones if $C$ is a facet of $D$, the latter has exactly one extremal generator $v$ not in $C$, and $\Lat(D)=\Lat(C)+\ZZ_+v$. 

A cone $C\subset\RR^d$ is called \emph{unimodular} if $\Hilb(C)$ is a part of a basis of $\ZZ^d$. 

If the extremal generators of a cone $C$ are linearly independent, then $C$ is said to be \emph{simplicial}.

For elements $u_1,\ldots,u_d\in\RR^d$ the matrix, whose $i$-th column is $u_i$, will be denoted by $[u_1|\ldots|u_d]$. Assume $u_1,\ldots,u_d$ are linearly independent and $C=\RR_+u_1+\cdots+\RR_+u_d$. Then we put:
\begin{align*}
&\para(u_1,\ldots,u_d)=\{\lambda_1u_1+\cdots+\lambda_du_d\ |\ 0\le\lambda_1,\ldots,\lambda_d<1\},\\
&\Lpara(u_1,\ldots,u_d)=\Lat(\para(u_1,\ldots,u_d))\setminus\{0\},\\
&\vol(u_1,\ldots,u_d)=\vol\big(\para(u_1,\ldots,u_d)\big)=|\det[u_1|\ldots| u_d]|,\\
&\mu(C)=\vol(u_1,\ldots,u_d)\ \text{if the}\ u_i\ \text{are \emph{primitive,} i.e., have coprime components}.
\end{align*}

A triangulation of a cone $C$ into simplicial cones is called \emph{unimodular} if the cones in the triangulation are unimodular, and it is called \emph{Hilbert} if the set of extremal generators of the involved cones equals $\Hilb(C)$. 

\begin{proposition}\label{summary}
\begin{enumerate}[\rm(a)]
\item
Let $C\subset\RR^d$ be a nonzero cone and $v\in\Lat(C)$ be a nonzero element in a 1-face of $C$. Then $\Lat(C)+\ZZ v=\Lat(C_0)+\ZZ v\cong\Lat(C_0)\times\ZZ v$ for some cone $C_0\subset\RR^d$ with $v\notin C_0$. 
\item
Let $C\subset\RR^d$ be a nonzero cone and $w\in\Lat(C)$ be an element in the relative interior of $C$. Then $\Lat(C)+\ZZ w=\Lat(\RR C)$.
\item Every nonzero cone has a unimodular triangulation.
\item For every 2-cone $C$ its only Hilbert triangulation is unimodular.
\item Every 3-dimensional cone has a unimodular Hilbert triangulation.
\end{enumerate}
\end{proposition}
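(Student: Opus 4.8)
The plan is to prove (a) and (b) by direct lattice arguments and to obtain (c)--(e) as a graded sequence of triangulation statements, with (e) carrying the real content. For (b), both inclusions are immediate. Since $w\in\Lat(C)$ and $\Lat(\RR C)=\ZZ^d\cap\RR C$ is a group, $\Lat(C)+\ZZ w\subseteq\Lat(\RR C)$. Conversely, given $x\in\Lat(\RR C)$, write $x+Nw=N(w+x/N)$: as $w$ lies in the relative interior of $C$, a whole neighbourhood of $w$ in $\RR C$ sits in $C$, so $x+Nw\in C$ for $N\gg0$, whence $x+Nw\in\Lat(C)$ and $x=(x+Nw)-Nw\in\Lat(C)+\ZZ w$.

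For (a) I would take $v$ to be the primitive generator of the $1$-face (this is what the asserted direct product forces: if $v=kw$ with $w$ primitive and $k>1$, then $w$ is a nonzero $k$-torsion class in $(\Lat(C)+\ZZ v)/\ZZ v$, obstructing the product). Extend $v$ to a lattice basis and let $\phi\colon\ZZ^d\to\ZZ$ be the dual coordinate with $\phi(v)=1$; set $C_0=\{x-\phi(x)v:x\in C\}$, the projection of $C$ along $\RR v$ into $\ker\phi$. Then $C_0\subseteq\ker\phi\not\ni v$, so $v\notin C_0$ and $\gp(\Lat(C_0))\cap\ZZ v=0$, making the sum direct; pointedness of $C_0$ follows from the supporting functional $\psi\ge0$ of the face $\RR_+v$, which has $\psi(v)=0$ and hence $\psi>0$ on $C_0\setminus\{0\}$. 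The equality $\Lat(C)+\ZZ v=\Lat(C_0)+\ZZ v$ then reduces to two one-line checks: $x\mapsto x-\phi(x)v$ carries $\Lat(C)$ into $\Lat(C_0)+\ZZ v$, and for $y\in\Lat(C_0)$ one has $y+Nv\in C\cap\ZZ^d=\Lat(C)$ for $N\gg0$.

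Part (c) I would settle by the classical multiplicity induction: cone over any triangulation of a transversal cross-section to reduce to simplicial cones, then induct on $\mu(\sigma)$. If $\mu(\sigma)=1$ the cone is unimodular; if $\mu(\sigma)>1$ then $\Lpara$ of its generators contains some $u=\sum\lambda_iu_i$ with $0\le\lambda_i<1$, and the star subdivision replacing $u_j$ by $u$ (for each $j$ with $\lambda_j>0$) produces simplicial cones of multiplicity $\le\lambda_j\mu(\sigma)<\mu(\sigma)$, so the induction terminates. For (d), in dimension $2$ the elements of $\Hilb(C)=\{h_0,\dots,h_k\}$ are the lattice points on the boundary of $\conv\big((C\cap\ZZ^d)\setminus\{0\}\big)$ in angular order, so any Hilbert triangulation must consist of the consecutive cones $\RR_+h_{i-1}+\RR_+h_i$, giving uniqueness; each triangle $\conv(0,h_{i-1},h_i)$ is empty by this convex-hull description, so $\vol(h_{i-1},h_i)=1$ and the cone is unimodular.

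Part (e) is the substantive one and where I expect the obstacle. The plan is: first produce some Hilbert triangulation, e.g. a pulling triangulation whose vertex set is exactly $\Hilb(C)$, yielding simplicial $3$-cones whose combined extremal generators are $\Hilb(C)$; then refine to make every cell unimodular while keeping all rays through $\Hilb(C)$. Here I would use the dimension-free criterion that a simplicial cone is unimodular iff its Hilbert basis is its set of extremal generators (equivalently $\mu=1$). The difficulty is that a non-unimodular cell $\sigma$ carries a subdivision point only in $\Hilb(\sigma)$, not a priori in $\Hilb(C)$: an empty, Reeve-type cell can be non-unimodular yet contain no further point of $\Hilb(C)$, so naive star subdivision leaves the class of Hilbert triangulations. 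Dimension $3$ is exactly what rescues this, through White's theorem that every empty lattice simplex in $\RR^3$ has lattice width $1$; I would use this width-$1$ normal form to locate a genuine $\Hilb(C)$-point on $\sigma$ at which to flip or subdivide, each move strictly decreasing total multiplicity. Organizing this width-$1$ bookkeeping into a terminating refinement is the crux of the whole proposition, and is precisely the step that breaks down in dimensions $\ge 4$; alternatively one may simply invoke the known three-dimensional results of \cite{Bouvier,SebHILB}.
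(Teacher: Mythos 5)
Your proposal is correct, but note that the paper does not actually prove Proposition \ref{summary} at all: it declares (a)--(d) standard, citing \cite[Chapter 2]{Kripo}, and attributes (e) to Seb\H o \cite{SebHILB} (reproduced in \cite[Theorem 2.78]{Kripo}) and to \cite{Aguzzoli,Bouvier}. So any comparison is between your arguments and the standard ones in those references, which they match. Your (b) is exactly the $N\gg0$ argument; your (a) via projection along $\RR v$ onto $\ker\phi$ is sound, and your side remark that the asserted splitting $\Lat(C_0)\times\ZZ v$ silently forces $v$ to be primitive is a correct and worthwhile observation (the statement as literally written fails for $v=2$ on $C=\RR_+\subset\RR^1$, and the paper only ever applies the lemma with $v$ primitive); your (c) is the usual stellar-subdivision induction on multiplicity, with the determinant computation $\mu(\sigma_j)\le\lambda_j\mu(\sigma)<\mu(\sigma)$ correct; your (d) via the boundary of $\conv(\Lat(C)\setminus\{0\})$ is the classical description of two-dimensional Hilbert bases. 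For (e) you correctly isolate the genuine obstruction --- a non-unimodular cell of a triangulation with ray set $\Hilb(C)$ need not contain any further element of $\Hilb(C)$, so naive refinement leaves the class of Hilbert triangulations --- and the width-one structure of empty three-dimensional simplices is indeed the kind of input Seb\H o's proof turns on; but your sketch stops short of a complete termination argument, so in the end you, like the paper, must fall back on citing \cite{SebHILB,Bouvier}, which you explicitly permit yourself. Net effect: you supply genuine proofs where the paper supplies none for (a)--(d), and the same level of detail (a citation) for the one part that carries real content.
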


The parts (a,b,c,d) are standard results on cones and all five parts are proved, for instance, in \cite[Chapter 2]{Kripo}. The part (e) is originally due to Seb\H o \cite{SebHILB} (whose argument is reproduced in \cite[Theorem 2.78]{Kripo}). It was later rediscovered in the context of toric geometry in \cite{Aguzzoli,Bouvier}, with important refinements. The existence of unimodular Hilbert triangulations fails already in dimension 4 \cite{Bouvier}.

For a poset $(\Pi,<)$, the geometric realization of its \emph{order (simplicial) complex} will be called the \emph{geometric realization} of $\Pi$ and denoted by $|\Pi|$. For generalities on poset topology we refer the reader to \cite{Wachs}, with the caution that our posets are mostly infinite. But the `finite vs. infinite' dichotomy never plays a role in our treatment. Section 1 in Quillen's foundational work on higher algebraic $K$-theory \cite{Quillen} remains an indispensable source for homotopy studies of general posets (in fact, general categories).  

\subsection{The poset of normal polytopes}\label{np(d)} A lattice polytope $P\subset\RR^d$ (i.e., a convex polytope with vertices in $\ZZ^d$) is \emph{normal} if for every $c\in\NN$ and every element $x\in\Lat(cP)$ there exist $x_1,\dots,x_c\in\Lat(P)$, such that $x=x_1+\cdots+x_c$. 

The order in the poset $\np(d)$ of normal polytopes in $\RR^d$, studied in \cite{Jumps}, is generated by the following elementary relations:
$P<Q\ \text{if}\ P\subset Q\ \text{and}\ \#\Lat(Q)=\#\Lat(P)+1$. 

The poset $\np(d)$ is known to have (nontrivial) minimal and maximal elements in dimensions $\ge4$.

The \emph{homogenization} map $P\mapsto C(P):=\RR_+(P\times\{1\})\subset\RR^d$ embeds the set of lattice polytopes $P\subset\RR^{d-1}$ into that of cones $C\subset\RR^d$. Moreover, a lattice polytope $P$ is normal if and only if $\Hilb(C(P))=\{(x,1)\ |\ x\in\Lat(P)\}$.

For a lattice $d$-polytope $P\subset\RR^d$ and a facet $F\subset P$ there exists a unique affine map $\height_F:\RR^d\to\RR$ with $\height_F(P)\subset\RR_+$ and $\height_F(\ZZ^d)=\ZZ$. We have the following compatibility between the facet-height functions: the two maps $\height_F(\cdot),\height_{C(F)}(\cdot,1):\RR^d\to\RR$ are same.

\section{The poset $\cones(d)$}\label{Cones}

\subsection{Elementary extensions} For a natural number $d$ we denote by $\cones(d)$ the set of cones $C\subset\RR^d$, made into a poset as follows: $C< D$ if and only if there exists a sequence of cones of the form
\begin{equation}\label{Cadd}
\begin{aligned} &C=C_0\subset\ldots\subset C_{n-1}\subset C_n=D,\\ &\Lat(C_i)=\Lat(C_{i-1})+\ZZ_+x\
\ \text{for some}\ \ x\in C_i\setminus C_{i-1},\quad i=1,\ldots,n. \end{aligned} \end{equation}

When $n=1$ we call $C\subset D$ an \emph{elementary extension}, or \emph{elementary descent} if read backwards. Here is an alternative characterization:

\begin{lemma}\label{corner-triangulation}
Let $C\subset\RR^d$ be a nonzero cone and $v\in\ZZ^d$ be a primitive vector with $\pm v\notin C$. Assume $H\subset\RR^d\setminus\{0\}$ is an affine hyperplane, meeting the cone $D=C+\RR_+v$ transversally. Put $v'=\RR_+v\cap H$. Then $C\subset D$ is an elementary extension in $\cones(d)$ if and only if there exist unimodular cones $U_1,\ldots,U_n\subset D$, satisfying the conditions:
\begin{enumerate}[\rm(i)]
\item $v\in U_i$, $i=1,\ldots,n$,
\item $D=C\ \bigcup\ \left(\bigcup_{i=1}^n U_i\right)$,
\item $\big\{\RR_+\big((U_i\cap H)-v'\big)\big\}_{i=1}^n$ is a triangulation of the cone $\RR_+\big((D\cap H)-v'\big)$.
\end{enumerate}
\end{lemma}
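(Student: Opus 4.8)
The plan is to first pin down the added generator, and then pass to the quotient lattice $\ZZ^d/\ZZ v$ and analyze fibers in the direction $v$. Since $D=C+\RR_+v$ with $\pm v\notin C$, the ray $\RR_+v$ is extremal in $D$: writing $v=a+b$ with $a,b\in D$ and expanding $a,b$ inside $C+\RR_+v$ forces $a,b\in\RR_+v$. Hence if $\Lat(D)=\Lat(C)+\ZZ_+x$ for some $x\in D\setminus C$, then applying this to $v$ itself and using that $v$ is primitive and extremal forces $x=v$. Thus $C\subset D$ is an elementary extension if and only if $\Lat(D)=\Lat(C)+\ZZ_+v$, and this is the identity I will characterize (note $\Lat(C)+\ZZ_+v\subseteq\Lat(D)$ always).

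\emph{Setup.} After replacing $\RR^d$ by $\RR D$ and $\ZZ^d$ by $\ZZ^d\cap\RR D$ I may assume $D$ is full-dimensional. Let $\pi\colon\RR^d\to\RR^d/\RR v$ be the quotient and $\bar N=\ZZ^d/\ZZ v$, a lattice of rank $d-1$ since $v$ is primitive; put $\bar D=\pi(D)=\pi(C)$. Because $v'\in H$ and $H$ meets $D$ transversally, $\pi$ carries the corner cone $\RR_+((D\cap H)-v')$ isomorphically onto $\bar D$ and each $\RR_+((U_i\cap H)-v')$ onto $\pi(U_i)$; hence condition (iii) is equivalent to saying that the cones $\pi(U_i)$ triangulate $\bar D$. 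For $\bar y\in\bar D$ let $F_{\bar y}=\pi^{-1}(\bar y)$, a line in direction $v$. Since adding $\RR_+v$ only extends fibers upward, $D\cap F_{\bar y}$ is a ray whose lowest point lies in $C$, while $C\cap F_{\bar y}$ is a bounded segment (as $\pm v\notin C$). The key observation is that the \emph{lattice}-lowest point $\ell(\bar y)$ of $D$ on $F_{\bar y}$ lies in $C$ for every $\bar y\in\Lat(\bar D)$ if and only if $\Lat(D)=\Lat(C)+\ZZ_+v$; one direction is the definition and the other is immediate from $\ell(\bar y)=c+kv$.

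\emph{Sufficiency.} Given $U_1,\dots,U_n$ as in (i)--(iii), each $U_i$ is a full-dimensional unimodular cone with $v$ as an extremal generator (from (iii) and unimodularity). Let $\sigma_i$ be the face of $U_i$ spanned by the remaining generators $w_{i,1},\dots,w_{i,d-1}$; then $\pi$ restricts to a linear isomorphism $\sigma_i\to\pi(U_i)$, so $S=\bigcup_i\sigma_i$ is a piecewise-linear section $\phi$ of $\pi$ over $\bar D$ and $U_i\cap F_{\bar y}=[\phi(\bar y),\infty)$ in the $v$-direction. Intersecting the covering identity (ii) with $F_{\bar y}$ gives $(C\cap F_{\bar y})\cup[\phi(\bar y),\infty)=D\cap F_{\bar y}$; since $C\cap F_{\bar y}$ is a bounded segment starting at the lowest point of $D$, the absence of a gap forces $\phi(\bar y)\in C\cap F_{\bar y}$ for all $\bar y$. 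Thus $S\subseteq C$, so every $w_{i,j}\in C$. Now any $y\in\Lat(D)$ either lies in $C$, or lies in some $U_i$, where unimodularity gives $y=kv+\sum_j m_j w_{i,j}$ with $k,m_j\in\ZZ_+$; since $\sum_j m_j w_{i,j}\in\Lat(C)$ we get $y\in\Lat(C)+\ZZ_+v$.

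\emph{Necessity, and the main difficulty.} Conversely, assume $\Lat(D)=\Lat(C)+\ZZ_+v$, so by the key observation $\ell(\bar y)\in C$ for every $\bar y\in\Lat(\bar D)$. Choose a unimodular triangulation of $\bar D$ with respect to $\bar N$ (Proposition \ref{summary}(c) in dimension $d-1$) and lift each extremal generator $\bar g$ of each of its cones to $\ell(\bar g)\in\Lat(C)$. Because $v$ is primitive, $\{v\}$ together with the lifts of the generators of any one cone is a $\ZZ$-basis of $\ZZ^d$, so the cones $U_i:=\RR_+v+\sum_j\RR_+\ell(\bar g_{i,j})$ are unimodular, contain $v$, and satisfy (iii) by construction; their non-$v$ generators lie in $C$, so the fiberwise argument of the previous paragraph runs in reverse and yields the covering (ii). I expect the crux of the whole proof to be exactly this fiberwise translation: verifying that the geometric gluing conditions (ii)--(iii) are equivalent to ``the floor of $D$ over $\bar D$ lies in $C$'', and hence to the monoid identity. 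By contrast, the existence of the unimodular triangulation of $\bar D$ and the lifting step are routine given Proposition \ref{summary}(c) and the primitivity of $v$.
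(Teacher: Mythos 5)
Your proof is correct and follows essentially the same route as the paper's: for the nontrivial direction you take a unimodular triangulation of the quotient cone $\bar D$ (the paper realizes $\ZZ^d/\ZZ v$ concretely as $\Lat(C_0)$ via Proposition \ref{summary}(a,c)), lift its generators into $C$, and cone with $v$; the converse, which the paper declares obvious, you spell out fiberwise, and you also usefully pin down that the added generator must be $v$ itself. The one point to tighten is in your sufficiency argument: $S=\bigcup_i\sigma_i$ need not a priori be single-valued over shared faces of the triangulation, so the no-gap argument should first be run over the relative interiors of the $\pi(U_i)$ (where only $U_i$ meets the fiber) and then combined with the closedness of $C$ to conclude $\sigma_i\subseteq C$.
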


\begin{proof}
The `if' part is obvious. For the `only if' part we use Proposition \ref{summary}(a,c) to fix a representation
$\Lat(C)+\ZZ v=\Lat(D)+\ZZ v=\Lat(C_0)+\ZZ v=\Lat(C_0)\times\ZZ v$ and a unimodular triangulation $C_0=\bigcup_{i=1}^nD_i$. Let $X\subset C$ be a finite subset, which maps bijectively to $\bigcup_{i=1}^n\Hilb(D_k)$ under the projection $C\to C_0$, induced by $v\mapsto0$. Let $X_i$ be the preimage of $\Hilb(D_i)$ in $X$. Then the cones $U_i=\RR_+X_i+\RR_+v$ satisfy (i--iii).
\end{proof}

\subsection{Height 1 and Hilbert basis extensions}\label{Height 1} There is a ubiquity of two types of elementary extensions in $\cones(d)$, making this poset essentially different from the more rarefied $\np(d)$. 

Let $C\subset\RR^d$ be a $d$-cone and $v\in\ZZ^d$ with $\pm v\notin C$. Denote by $\FF^+(v)$ the set of facets of $C$, visible from $v$, i.e., $\height_F(v)<0$ for every $F\in\FF^+(v)$. Consider the visible part of the boundary $\partial C$, i.e., $C^+(v)=\bigcup_{\FF^+(v)}F$.
Put $D=C+\RR_+v$. There is a sequence of rational numbers $0<\lambda_1<\lambda_2<\ldots$ with $\lambda_1=\frac1{\max(-\height_F(v)\ :\ F\in\FF^+(v))}$ and $\lim_{k\to\infty}\lambda_k=\infty$, satisfying the conditions:
\begin{align*} 
\Lat(D\setminus
C)=\bigcup_{k=1}^\infty\Lat(\lambda_kv+C^+(v))\ \text{and}\ \ \Lat(\lambda_kv+C^+(v))\not=\emptyset,\quad k=1,2,\ldots\\
\end{align*}
The equality $\lambda_1=1$ equivalent to the condition $\height_F(v)=-1$ for all $F\in\FF^+(v)$. In this case we say that $D$ is a \emph{height 1 extension} of $C$. All height 1 extensions are elementary extensions of cones but the converse is not true \cite[Theorem 4.3]{Jumps}.

The second class of elementary extensions in $\cones(d)$ are the extensions of type $C\subset D$, where $\RR_+(\Hilb(D)\setminus\{v\})\subset C$ for an extremal generator $v\in D$. We call this class the \emph{Hilbert basis extensions} (or \emph{descents}).

As an application of the two types of extensions, we have
\begin{lemma}\label{no-extremal-elements}
For every natural number $d\ge2$:
\begin{enumerate}[\rm(a)]
\item
Every elementary extensions of cones $0\not=C<D$ there exists a cone $E$, such that $C<E<D$;
\item $\cones(d)$ has neither maximal nor minimal elements, except the only minimal element 0.
\end{enumerate}
\end{lemma}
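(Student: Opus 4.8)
The two parts divide naturally: (b) is essentially a corollary of (a) together with the existence of height 1 and Hilbert basis extensions/descents, so I would establish (a) first and then harvest (b).

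For part (a), I am given an elementary extension $0\neq C<D$ and must interpolate a cone $E$ strictly between them. My plan is to exploit the combinatorial structure provided by Lemma \ref{corner-triangulation}. Since $C<D$ is elementary, I can write $\Lat(D)=\Lat(C)+\ZZ_+x$ for a single primitive $x\in D\setminus C$, and fix an affine hyperplane $H$ meeting $D$ transversally together with unimodular cones $U_1,\ldots,U_n$ as in that lemma, so that $D=C\cup\bigl(\bigcup_{i=1}^n U_i\bigr)$ and the slices $\RR_+\bigl((U_i\cap H)-v'\bigr)$ triangulate $\RR_+\bigl((D\cap H)-v'\bigr)$. The idea is that if $n\geq 2$, I can split off a proper subcollection: set $E=C\cup\bigl(\bigcup_{i\in S}U_i\bigr)$ for a carefully chosen proper subset $S\subset\{1,\ldots,n\}$ whose slices still form a subcomplex glued to $C$ along $C^+(x)$, and then both $C<E$ and $E<D$ are genuine elementary extensions with $C\subsetneq E\subsetneq D$. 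The delicate point is the case $n=1$: here $D$ is obtained from $C$ by adjoining a single unimodular cone, so I would instead refine the triangulation — subdivide the simplicial slice by introducing an interior lattice point of a suitable dilate (using Proposition \ref{summary}(a) to reduce modulo $\ZZ x$ to a lower-dimensional monoid and picking an indecomposable interpolating generator there). Concretely, among the points $\Lat(\lambda_k x+C^+(x))$ from the height-function analysis of Section \ref{Height 1}, I expect to pick an intermediate lattice point $y$ with $E=C+\RR_+y$ strictly between $C$ and $D$.

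For part (b), absence of maximal elements is immediate: given any cone $C$, choose any primitive $v$ with $\pm v\notin C$ (possible since $C$ is pointed, hence not all of $\RR^d$, for $d\geq 2$), and form the height 1 extension $D=C+\RR_+v$; by the discussion in Section \ref{Height 1} this is an elementary extension, so $C<D$ strictly. For the absence of nontrivial minimal elements, I would run a Hilbert basis descent: given $0\neq C$ with at least two extremal generators (which every nonzero cone other than a ray has; and a ray $\RR_+v$ descends to $0$ directly via removing its generator), pick an extremal generator $v$ and pass to a cone $C'\subsetneq C$ with $\RR_+\bigl(\Hilb(C)\setminus\{v\}\bigr)\subset C'<C$, lowering the dimension or the generating set until only $0$ remains. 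The only subtlety is verifying that such a descent is available at every nonzero cone, i.e., that $0$ is the unique obstruction, which follows because any nonzero pointed cone has an extremal generator whose removal yields a strictly smaller cone.

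The main obstacle I anticipate is the $n=1$ subcase of part (a): unlike $n\geq 2$, there is no subcollection of unimodular pieces to peel off, so the interpolation must come from genuinely subdividing a single unimodular extension. Making this rigorous requires producing an intermediate lattice point that yields a strict elementary extension on both sides, and I would lean on the explicit sequence $\lambda_1<\lambda_2<\cdots$ and the nonemptiness of each $\Lat(\lambda_k x+C^+(x))$ to guarantee such a point exists when the extension is not already of the shortest possible type.
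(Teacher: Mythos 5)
Part (b) of your plan is essentially the paper's argument: height~1 extensions rule out maximal elements and Hilbert basis descents rule out minimal ones. (One small repair: an arbitrary primitive $v$ with $\pm v\notin C$ need not give a \emph{height 1} extension; you must choose $v$ with $\height_F(v)=-1$ on every visible facet, though such $v$ always exist.) The genuine gap is in part (a), where your main line of attack cannot work. Write $D=C+\RR_+x$ with $\Lat(D)=\Lat(C)+\ZZ_+x$. Every $U_i$ produced by Lemma \ref{corner-triangulation} contains $x$ by condition (i). If $E=C\cup\bigl(\bigcup_{i\in S}U_i\bigr)$ with $S\neq\emptyset$ is to be an element of $\cones(d)$, it must be a convex cone containing $C$ and $x$, hence $E\supseteq C+\RR_+x=D$, i.e., $E=D$. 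More generally, \emph{no} proper intermediate cone can contain $x$: any such $E$ would satisfy $\Lat(E)\supseteq\Lat(C)+\ZZ_+x=\Lat(D)$ and therefore $E=\RR_+\Lat(E)=D$. So there is no ``careful choice of $S$''; the case $n\ge2$ is exactly as hard as $n=1$, and the interpolant must be generated by a ray that avoids $x$.

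Your fallback for $n=1$ is the right idea --- and it is in fact the paper's entire proof, applied uniformly in $n$ --- but you leave it incomplete precisely where it matters, hedging with ``when the extension is not already of the shortest possible type.'' The lemma asserts interpolation for \emph{every} elementary extension with $C\neq0$, including unimodular ones. The paper closes this by first reducing to $\dim C=d$ and then taking $E=C+\RR_+w$ for any $w\in\Lat(\lambda_1x+C^+(x))$ with $w\neq x$: such a $w$ exists because once one lattice point $w_0$ lies on a translated visible facet $\lambda_1x+F$, so does all of $w_0+\Lat(F)$, an infinite set (here $\dim F=d-1\ge1$ is used, which is why the reduction to full dimension is not cosmetic). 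One then checks that every facet of $C$ visible from $w$ has $\height_F(w)=-1$ (so $C<E$ is a height~1, hence elementary, extension) and that $x\notin E$ (so $E\subsetneq D$ and $E<D$ because $\Lat(D)=\Lat(E)+\ZZ_+x$). Without producing such a $w$ unconditionally, your proof of (a) does not go through.
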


\noindent\emph{Remark.} We do not know whether $0$ is the \emph{smallest} element of $\cones(d)$. If $0$ was the smallest element, then the geometric realization of $\cones(d)$ would be contractible; see Section \ref{Topology} for topological aspects of $\cones(d)$.

\begin{proof} (a) The general case easily reduces to the full-dimensional case and then the claim follows from the observation that there is always a height 1 extension $C\subset E$ with $E\subsetneq D$. In fact, if $\{v\}=\Hilb(D)\setminus C$, then we can take $E=C+\RR_+w$ where $w\in\Lat(\lambda_1v+C^+(v))$  with $w\not=v$ (notation as above). Obviously, $E\subset D$ is an elementary extension.

For (b) one applies appropriate height 1 extensions to show that there are no maximal elements and Hilbert basis descents to show that there are no minimal elements in $\cones(d)\setminus\{0\}$.
\end{proof}

We formally include the extensions of type $0\subset C$, $\dim C=1$, in both classes of elementary extensions, discussed above.

\begin{question}\label{unimodular?}
Do either the height 1 or Hilbert basis extensions generate the same poset $\cones(d)$?
\end{question}

\subsection{Distinguished subposets}\label{Subposets}
The subposet of $\cones(d)$, consisting of the cones in  $\big(\RR^{d-1}\times\RR_{>0}\big)\cup\{0\}$, will be denoted by $\cones^+(d)$.  The homogenization embedding $\np(d-1)\to\cones^+(d)$ is a monotonic map. However, the order in $\np(d-1)$ is weaker than the one induced from $\cones(d)$:

\begin{example}\label{C12}
In \cite[Example 4.8]{Jumps}  we have the polytope $P\in\np(3)$ with vertices: $(0,0,2),(0,0,1),(0,1,3),(1,0,0),(2,1,2),(1,2,1)$. The polytope has two more lattice points: $(1,1,2),(1,1,1).$ Removing either the first \emph{or} the second vertex and taking the convex hull of the other lattice points in $P$ yields a nonnormal polytope. However, the convex hull $Q$ of the lattice points in $P$ with the exception of the first two vertices is normal. We have $Q\not< P$ in $\np(3)$. Yet, using \textsf{Polymake} \cite{Poly}, one quickly finds four Hilbert basis descents (requiring additional Hilbert basis elements at height two) $C(P)>C_1>C_2>C_3>C(Q)$.

\end{example}

For every integer $h>0$ we consider the poset $\cones^{(h)}(d)$ of cones in $\cones^+(d)$, satisfying $\Hilb(C)\subset\RR^{d-1}\times[0,h]$ and ordered as in (\ref{Cadd}) under the additional requirement that the intermediate cones $C_i$ are also from $\cones^{(h)}(d)$.


\begin{lemma}\label{Inclusions}
For every natural $d\ge1$:
\begin{enumerate}[\rm(a)]
\item $\cones^{(1)}(d)\setminus\{0\}=\np(d-1)$;
\item $\cones^{(1)}(d)\subset\cones^{(2)}(d)\subset\ldots$ and $\bigcup_{h=0}^\infty\cones^{(h)}(d)=\cones^+(d)$;
\item $\pol(d-1)\subset\cones^{(d-2)}(d)$, assuming $d\ge3$.
\end{enumerate}
\end{lemma}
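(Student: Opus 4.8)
The plan is to dispatch the three parts in increasing order of difficulty, with part (c) carrying the real content. For part (a) I would first identify the underlying sets. A cone $C\in\cones^+(d)\setminus\{0\}$ has all its nonzero lattice points at positive integral height, so the constraint $\Hilb(C)\subset\RR^{d-1}\times[0,1]$ forces every Hilbert basis element to sit at height exactly $1$. Then $C$ is generated by height-$1$ lattice points, whence $C=C(P)$ for the polytope $P$ with $P\times\{1\}=C\cap(\RR^{d-1}\times\{1\})$, and $\Hilb(C)=\{(x,1):x\in\Lat(P)\}$ is precisely the normality criterion recalled in Section \ref{np(d)}; conversely every normal $P$ yields such a $C(P)$. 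For the orders I would show that a single elementary extension $C\lessdot D$ inside $\cones^{(1)}(d)$ matches exactly an elementary relation $P\lessdot Q$ in $\np(d-1)$: writing $\Lat(D)=\Lat(C)+\ZZ_+x$ with $x\in D\setminus C$, a height comparison (using that $D$ is normal, so $\Hilb(D)$ lies at height $1$) forces $x$ itself to have height $1$ and to be the \emph{unique} lattice point of $D$ off $C$ at height $1$, so $\#\Lat(Q)=\#\Lat(P)+1$; the converse is the remark that adjoining one height-$1$ generator to a normal cone gives the homogenization of the enlarged normal polytope. Matching generating relations then identifies the two posets.

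For part (b) the set-level inclusions $\cones^{(h)}(d)\subset\cones^{(h+1)}(d)$ are immediate from $[0,h]\subset[0,h+1]$, and monotonicity of the inclusion follows because any witnessing chain (\ref{Cadd}) with intermediate cones in $\cones^{(h)}(d)$ is still admissible in $\cones^{(h+1)}(d)$. For the union I would use finiteness twice: every $C\in\cones^+(d)$ has a finite Hilbert basis, so $\Hilb(C)\subset\RR^{d-1}\times[0,h]$ with $h$ the maximal occurring height, placing $C\in\cones^{(h)}(d)$; and any relation $C<D$ in $\cones^+(d)$ is witnessed by a finite chain of cones, each with finite Hilbert basis, so a single $h$ bounding all their heights puts the whole chain inside $\cones^{(h)}(d)$. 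Thus $\cones^+(d)$ is the direct limit of the $\cones^{(h)}(d)$.

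For part (c) the substance is a height bound: for a lattice polytope $P\subset\RR^{d-1}$ of dimension $n\le d-1$, every element of $\Hilb(C(P))$ has height at most $\max(n-1,1)\le d-2$ (the last inequality using $d\ge3$ and $n\le d-1$). I would prove the sharp statement that $\Lat((k+1)P)=\Lat(kP)+\Lat(P)$ for all $k\ge n-1$, which says every lattice point of $C(P)$ above height $n-1$ is decomposable, i.e.\ $\Hilb(C(P))$ lies at heights $\le n-1$. First I would fix a triangulation of $P$ into \emph{empty} lattice simplices, for instance any triangulation whose vertex set is all of $\Lat(P)$, so that $\Delta\cap\ZZ^{d-1}$ equals the vertex set of each simplex $\Delta$. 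Given $z\in\Lat((k+1)P)$, I locate $z/(k+1)$ in a simplex $\Delta=\conv(v_0,\dots,v_n)$ and write $z=\sum_i\mu_iv_i$ with $\mu_i\ge0$ and $\sum_i\mu_i=k+1$. If some $\mu_i\ge1$, then $z-v_i\in\Lat(k\Delta)\subset\Lat(kP)$, and $(z,k+1)=(v_i,1)+(z-v_i,k)$ is a genuine decomposition (both summands nonzero, since $k\ge n-1\ge1$ when $n\ge2$).

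The one obstacle is the remaining case, all $\mu_i<1$: then $k+1=\sum_i\mu_i<n+1$ forces $k=n-1$ and $\sum_i\mu_i=n$, with every $\mu_i\in(0,1)$. Here I would consider the complementary point $w:=\sum_iv_i-z=\sum_i(1-\mu_i)v_i$; its barycentric coordinates $1-\mu_i$ are strictly positive and sum to $1$, so $w$ lies in the relative interior of $\Delta$, while $w\in\ZZ^{d-1}$. This exhibits an interior lattice point of $\Delta$ distinct from its vertices, contradicting emptiness of $\Delta$. Hence the bad case never occurs, the bound follows, and together with $C(P)\in\cones^+(d)$ it gives $C(P)\in\cones^{(d-2)}(d)$ for every $P\in\pol(d-1)$; the degenerate cases $n\le1$ (points and segments) I would settle directly, as such $P$ are normal and thus have $\Hilb(C(P))$ at height $1\le d-2$. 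I expect this exclusion of the top-height parallelepiped point — the step upgrading the naive bound $n$ to the sharp $n-1$ — to be the crux of the whole lemma.
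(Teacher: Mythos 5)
Your proposal is correct; the paper itself offers no argument here, declaring (a) and (b) obvious and citing \cite[Theorem 2.52]{Kripo} for (c). Your part (c) is precisely the standard argument behind that citation (empty triangulation plus the complementary-point trick showing $\Lat((k+1)P)=\Lat(kP)+\Lat(P)$ for $k\ge\dim P-1$), and your fleshed-out versions of (a) and (b), including the height computation forcing the new generator to sit at height $1$, are sound.
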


\noindent The inclusions above are those of sets, which may not represent subposets.

The parts (a,b) are obvious; (c) is proved, for instance, in \cite[Theorem 2.52]{Kripo}.

\subsection{The cone conjecture}\label{Cone-conjecture} The following conjecture is the maximal possible strengthening of the absence of extremal elements in $\cones(d)$:

\begin{conjecture}\label{conjecture-additive} For every $d$, the order in $\cones(d)$ is the inclusion order. 
\end{conjecture}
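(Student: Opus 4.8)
The plan is to establish the one nontrivial implication: $C\subseteq D$ forces $C<D$ in $\cones(d)$; the reverse is immediate because every step of (\ref{Cadd}) is an inclusion. First I would reduce to adjoining a single extremal ray. Replacing $\RR^d$ by $\RR D$ with the induced lattice, we may assume $D$ is full-dimensional. If $v_1,\dots,v_k$ are the extremal generators of $D$ that do not lie in $C$, then $D=C+\RR_+v_1+\cdots+\RR_+v_k$, so by iterating it suffices to connect an arbitrary cone $C'$ to $C'+\RR_+v$ by a finite chain of elementary extensions, where $v$ is primitive and $\pm v\notin C'$. This is precisely the geometry of Lemma \ref{corner-triangulation}, whose criterion decides when one such step is already elementary.

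For this single-ray step I would induct on $d$, taking $d\le 3$ as the base case via Theorem \ref{dim-3}. The idea is to sweep $D=C'+\RR_+v$ by the visible-boundary layers $\lambda_kv+C^+(v)$ of Section \ref{Height 1} and to adjoin the lattice points of $\Lat(D\setminus C')$ layer by layer in order of increasing $k$; adjoining a point of the bottom layer already yields an elementary extension by the construction in Lemma \ref{no-extremal-elements}(a). Each layer sits in a translate of the cone over a visible facet, hence is governed by $(d-1)$-dimensional data, and triangulating it (Proposition \ref{summary}(c)) and pulling the pieces back through $v$ as in the proof of Lemma \ref{corner-triangulation} ought to express the crossing of one layer as a finite composite of elementary extensions.

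The hard part will be controlling the interior lattice points. In dimension $\le 3$ every cone has a unimodular Hilbert triangulation (Proposition \ref{summary}(d,e)), and one can order its simplicial pieces so that each meets the union of its predecessors in a full facet and is attached by a single elementary extension; this is what makes Theorem \ref{dim-3} go through. In dimension $\ge 4$ such triangulations need not exist (the remark following Proposition \ref{summary}), producing a genuine tension: a triangulation fine enough to be unimodular introduces rays outside $\Hilb(D)$, whereas one using only Hilbert basis rays need not be unimodular, and its cones need not satisfy the defining condition $\Lat(C_i)=\Lat(C_{i-1})+\ZZ_+x$ of an elementary extension. Guaranteeing that every lattice point of a layer can be produced without the chain first leaving $D$ — that is, that progress is monotone — is the crux, and is exactly what the connectivity statement of Theorem \ref{diameter} avoids by permitting descents.

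As a more tractable intermediate target I would first attack the conjecture inside the subposets $\cones^{(h)}(d)$. Since $\cones^{(1)}(d)\setminus\{0\}=\np(d-1)$ and $\bigcup_h\cones^{(h)}(d)=\cones^+(d)$ by Lemma \ref{Inclusions}, an induction on the height bound $h$ would both organize the lattice points a chain must create and tie the problem to the already-studied poset of normal polytopes; a successful base step here would simultaneously clarify the embedding of $\np(d-1)$ into $\cones(d)$ and isolate the purely height-theoretic obstruction in the general case.
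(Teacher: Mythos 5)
This statement is a \emph{conjecture} in the paper, not a theorem: the authors prove it only for $d\le 3$ (the $d=2$ case in Section \ref{Cone-conjecture}, the $d=3$ case as Theorem \ref{dim-3}) and offer computational evidence in dimension $4$. Your proposal is therefore not, and cannot be judged against, a proof the paper contains; and as written it is a research plan rather than a proof. The reductions you make are sound and consistent with the paper's own setup --- the reverse implication is indeed immediate, the reduction to a single-ray step $C'\subset C'+\RR_+v$ is exactly how the paper frames both the $d=2$ and $d=3$ arguments, and the further reduction to one visible facet is carried out at the end of the proof of Theorem \ref{dim-3}. But the core step is missing, and you say so yourself: you do not show that the layer-by-layer sweep terminates, nor that crossing one layer decomposes into elementary extensions.

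Concretely, two things break down in your plan for $d\ge 4$. First, the ``sweep by layers $\lambda_k v+C^+(v)$'' is essentially the bottom-up procedure of Section \ref{Cone-conjecture}, for which the paper has only experimental evidence of termination; after each elementary extension the cone, its visible boundary, and the set of layers all change, so there is no a priori monotone quantity ensuring the process stays inside $D$ and ends at $D$. (In dimension $3$ the paper gets such a quantity, namely $\mu$, via Lemma \ref{3-inequality}; no analogue is supplied here.) Second, the induction on $d$ via the $(d-1)$-dimensional geometry of a layer does not transfer the monoid condition $\Lat(C_i)=\Lat(C_{i-1})+\ZZ_+x$: that condition constrains \emph{all} lattice points of $C_i$, including interior ones at arbitrary height above the layer, not just the lattice points of the layer itself, so ``pulling back through $v$'' as in Lemma \ref{corner-triangulation} requires the splitting $\Lat(C)+\ZZ v=\Lat(C_0)\times\ZZ v$, which is unavailable when $v$ is being adjoined rather than already an extremal ray. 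Your honest identification of the obstruction (failure of unimodular Hilbert triangulations in dimension $\ge 4$) is exactly the right diagnosis, but it is a diagnosis, not a cure; the statement remains open.
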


The case $d=1$ is obvious.

When $d=2$, the general case reduces to a pair of cones $C\subset D$ in $\RR^2$, with $\dim D=2$ and $C$ a facet of $D$. Assume $\{v_1,\ldots,v_n\}=\Hilb(D)$ and $v_1\in C$. Then, by Proposition \ref{summary}(d), we have the following height 1 extensions:
\begin{align*}
C<C+\RR_+v_2<\ldots<C+\RR_+v_2+\cdots+\RR_+v_n=D.
\end{align*} 

In Sections \ref{3D} we give a proof for $d=3$. 

In dimension 4 we have the following computational evidence. 

Assume $C\subset\RR^d$ is a cone and $v\in\ZZ^d$ with $\pm v\notin C$. We use the notation in Section \ref{Height 1}. In particular, $D=C+\RR_+v$. One introduces the \emph{bottom-up} procedure for constructing an ascending sequence of height 1 extensions, starting with the cone $C$, as follows: one chooses a shortest vector $v_1\in\Lat(\lambda_1v+C^+(v))$, repeats the step for the pair $C_1\subset D$ where $C_1=C+\RR_+v_1$, and iterates the process. The height 1 extensions we obtain this way tend to widen the cone as much as possible at each step, as measured by the increments of the Euclidean $(d-1)$-volume of the cross sections with a pre-chosen affine hyperplane, transversally meeting the cone $D$. 

A complementary approach employs Hilbert basis descents. The corresponding \emph{top-down} procedure finds a sequence $D=D_0>D_1>\ldots$ of Hilbert basis descents of the form $D_{i+1}=C+\RR_+(\Hilb(D_i)\setminus\{v_i\})$, at each step discarding a shortest extremal generator $v_i\in D_i\setminus C$.

Andreas Paffenholz implemented the bottom-up and top-down procedures in $\RR^4$. The computational evidence, based on many randomly generated cones $C$ and vectors $v$, supports the expectation that there are no non-terminating processes of either type, with the tendency of the bottom-up process to last longer than the top-down one. 

\section{Cones in $\RR^3$}\label{3D}

\begin{lemma}\label{3-inequality}
Let $u,v,w\in\RR^3$ be linearly independent vectors and $x,y\in\para(u,v,w)$. Then $\vol(u,x,y)<\vol(u,v,w)$.
\end{lemma}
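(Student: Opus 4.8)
The plan is to reduce the whole inequality to a single scalar bound on a $2\times2$ minor. Since $u,v,w$ are linearly independent, they form a basis of $\RR^3$, so I can write
\begin{equation*}
x=a_1u+a_2v+a_3w,\qquad y=b_1u+b_2v+b_3w,
\end{equation*}
and because $x,y\in\para(u,v,w)$ all six coefficients lie in the half-open interval $[0,1)$. Recalling that $\vol(u,x,y)=|\det[u|x|y]|$ and $\vol(u,v,w)=|\det[u|v|w]|$, the entire claim becomes a statement about these determinants.

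The key computation is to expand $\det[u|x|y]$ by multilinearity in its second and third columns. Every term that uses the $u$-component $a_1u$ of $x$ or $b_1u$ of $y$ produces a determinant with a repeated column $u$ and therefore vanishes. What survives is
\begin{equation*}
\det[u|x|y]=\det[u\,|\,a_2v+a_3w\,|\,b_2v+b_3w]=(a_2b_3-a_3b_2)\,\det[u|v|w].
\end{equation*}
Taking absolute values gives $\vol(u,x,y)=|a_2b_3-a_3b_2|\cdot\vol(u,v,w)$, so it remains only to prove the scalar inequality $|a_2b_3-a_3b_2|<1$.

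For this last step the decisive feature is that $\para$ is the \emph{half-open} parallelepiped, so the coefficients obey the strict bounds $0\le a_2,a_3,b_2,b_3<1$. Hence $0\le a_2b_3<1$ and $0\le a_3b_2<1$; since $a_2b_3-a_3b_2$ equals a number in $[0,1)$ minus a number in $[0,1)$, it lies strictly between $-1$ and $1$, giving $|a_2b_3-a_3b_2|<1$. Because $u,v,w$ are linearly independent we also have $\vol(u,v,w)=|\det[u|v|w]|>0$, and multiplying the two facts yields the strict inequality $\vol(u,x,y)<\vol(u,v,w)$.

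There is no genuine obstacle in this argument; it is an elementary determinant expansion followed by a one-line bound. The only point demanding care is the \emph{strictness} of the final inequality, which is precisely where the convention $\lambda_i<1$ in the definition of $\para$ is used: over the \emph{closed} parallelepiped the same computation would deliver only $\vol(u,x,y)\le\vol(u,v,w)$.
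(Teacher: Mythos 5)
Your proof is correct and is essentially the same argument as the paper's: the paper simply normalizes $(u,v,w)$ to the standard basis first, which is equivalent to your change-of-basis expansion, and then bounds the same $2\times2$ minor $|a_2b_3-a_3b_2|<1$ using the half-open constraint $0\le a_i,b_i<1$. Your remark about where strictness comes from matches the paper's use of $\max(|x_2y_3|,|x_3y_2|)<1$.
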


\begin{proof}
We can assume $(u,v,w)=(\e_1,\e_2,\e_3)$. Let $x=(x_1,x_2,x_3)$ and $y=(y_1,y_2,y_3)$. Then
\begin{align*}
\vol(\e_1,x,y)=\left|\det
\begin{pmatrix}
1&0&0\\
x_1&x_2&x_3\\
y_1&y_2&y_3\\
\end{pmatrix}
\right|
=|&x_2y_3-x_3y_2|\le\\
&\max\big(|x_2y_3|,|x_3y_2|\big)<1.
\end{align*}
\end{proof}

\begin{theorem}\label{dim-3}
The order in $\cones(d)$ is the inclusion order for $d=3$.
\end{theorem}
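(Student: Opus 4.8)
My plan is to prove the substantive direction of the statement, namely that $C\subsetneq D$ in $\RR^3$ forces $C<D$ in $\cones(3)$; the converse is immediate, since each relation in (\ref{Cadd}) strictly enlarges the cone, so $C<D$ implies $C\subsetneq D$. For the hard direction I would first peel off a sequence of reductions and then run an induction on multiplicity. If $\dim D\le 2$ the whole picture lives in the rational plane $\RR D$ with its rank $\le 2$ lattice, and the cases $d\le 2$ established above apply. So assume $\dim D=3$. Writing the extremal generators of $D$ not contained in $C$ as $g_1,\dots,g_m$ and setting $D_j=C+\RR_+g_1+\dots+\RR_+g_j$, I obtain $C\subsetneq D_1\subsetneq\dots\subsetneq D_m=D$ with every $D_j\subseteq D$; by transitivity it suffices to treat a single adjunction $E<E+\RR_+g$ with $\pm g\notin E$. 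When $\dim E=2$, so that $E$ is a facet of the simplicial $3$-cone $E+\RR_+g$, this is the basic case below. When $\dim E=3$, the beneath--beyond picture writes $(E+\RR_+g)\setminus E$ as the union of the pyramids $F+\RR_+g$ over the facets $F$ of $E$ visible from $g$, each of which is a simplicial $3$-cone with facet $F$; so it suffices to adjoin these pyramids one at a time. Everything thus reduces to the \emph{basic case}: $D=\RR_+a+\RR_+b+\RR_+g$ is a simplicial $3$-cone and $C=\RR_+a+\RR_+b$ is a facet.

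For the basic case I would induct on the multiplicity $\mu(D)=\vol(a,b,g)$. If $\Hilb(D)\setminus\{g\}\subseteq C$, then $C\subset D$ is a single Hilbert basis extension (this in particular covers $\mu(D)=1$, where it is even a height $1$ extension), and we are done. Otherwise I pick a lattice point $p\in\Lpara(a,b,g)$, say $p=\alpha a+\beta b+\gamma g$ with $0\le\alpha,\beta,\gamma<1$, chosen with $\gamma>0$ so that $p\notin C$. The star subdivision of $D$ at $p$ produces the three simplicial cones $\RR_+a+\RR_+b+\RR_+p$, $\RR_+a+\RR_+p+\RR_+g$ and $\RR_+b+\RR_+p+\RR_+g$, of multiplicities $\gamma\,\mu(D)$, $\beta\,\mu(D)$ and $\alpha\,\mu(D)$ respectively; each is strictly smaller than $\mu(D)$ by a determinant computation in the spirit of Lemma \ref{3-inequality} (a piece degenerates, and is simply omitted, exactly when the corresponding coefficient vanishes). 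The induction hypothesis then applies to the facet extensions carried by these pieces, and splicing the resulting chains ought to give $C<D$.

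The main obstacle is precisely this splicing. The three pieces here — and, in the beneath--beyond step, the several pyramids over visible facets — do \emph{not} attach to the growing cone along a single facet: they meet it along shared walls, that is, they fill notches. Consequently, relativizing an elementary extension of a facet $F$ to a chain of elementary extensions of a larger cone that merely contains $F$ in its boundary is delicate, since a naive transport of the chain would re-add lattice points that are already present and so violate the single-generator condition in (\ref{Cadd}). I expect the resolution to require processing the new lattice points one at a time in a carefully chosen order and checking, at each step via Lemma \ref{corner-triangulation} together with the unimodular Hilbert triangulations furnished by Proposition \ref{summary}(e), that adjoining the next point to the current cone is a genuine elementary extension contained in $D$. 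It is exactly here that the hypothesis $d=3$ is indispensable: both Proposition \ref{summary}(e) and the volume inequality of Lemma \ref{3-inequality} break down in dimension $\ge 4$, which is consistent with Conjecture \ref{conjecture-additive} remaining open there.
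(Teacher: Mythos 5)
Your reductions to the basic case (a facet $C$ of a simplicial $3$-cone $D$) and your induction on $\mu(D)$ match the paper's strategy, and you have correctly located the crux: turning a chain of elementary extensions of a lower-dimensional piece into a chain of elementary extensions of the growing cone. But your proposal does not resolve that crux, and the specific decomposition you choose makes it unresolvable as stated. If $p=\alpha a+\beta b+\gamma g\in\Lpara(a,b,g)$ has all three coefficients positive (which happens for typical $D$, e.g.\ when every facet of $D$ is unimodular but $D$ is not), then the ray $\RR_+p$ lies in the interior of $D$, and the union of \emph{any} two of the three star-subdivision pieces is a cone over a quadrilateral with a reflex vertex at $p$ in a transversal cross-section; it is not convex, hence not a cone, so it cannot appear as an intermediate object in a chain of the form (\ref{Cadd}). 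There is therefore no ordering of the three pieces that lets you adjoin them one at a time, and the difficulty is not merely the ``delicate bookkeeping'' of re-added lattice points that you anticipate: the intermediate sets simply fail to be cones. Your appeal to Lemma \ref{corner-triangulation} and Proposition \ref{summary}(e) to ``process the new lattice points one at a time in a carefully chosen order'' is exactly the missing proof, not a sketch of one.

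The paper's induction step avoids this by a different decomposition. Instead of star-subdividing at one point of $\Lpara(D)$, it adjoins \emph{all} of $\Lpara(D)$ at once, forming $E=C+\RR_+\Lpara(D)$ with extremal generators $v_0,v_1,\dots,v_k$ in cyclic order ($v_0,v_1\in C$), and then triangulates $E$ by the cones $D_i=\RR_+v_0+\RR_+v_i+\RR_+v_{i+1}$, all sharing the ray $\RR_+v_0$ through a vertex of $C$. This buys three things at once: every partial union $D_1\cup\dots\cup D_i$ is the cone over a convex polygon with consecutive vertices, hence genuinely a cone; each $D_i$ attaches to the previous union along the single facet $D_{i-1}\cap D_i$, so the inductive chain for $(D_{i-1}\cap D_i)<D_i$ transports to the union without creating non-cones or re-adding lattice points; and the leftover region $D\setminus E$ is not handled by splicing at all but by the single Hilbert basis extension $E<D$, valid because $\Hilb(D)\setminus\{w\}\subseteq E$. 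Lemma \ref{3-inequality} is tailored to this triangulation (one original generator $v_0$ and two points of the fundamental parallelepiped), which is why $\mu(D_i)<\mu(D)$ and the induction closes. A similar remark applies to your general-case reduction: adjoining the pyramids over the visible facets ``one at a time'' again fills notches along shared walls, whereas the paper's intermediate cone $B$, chosen so that exactly one facet of $B$ is visible from $v$, reduces each step to a single pyramid attached along a single facet. To repair your argument you would essentially have to replace the star subdivision by the paper's construction of $E$.
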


\begin{proof}
We first prove the following basic case: for any simplicial 3-cone $D\subset\RR^3$ and any facet $C\subset D$ we have $C< D$. This will be done by induction on $\mu(D)$ (defined in the introduction).

The case $\mu(C)=\mu(D)$ is obvious because $D$ is a unimodular extension of $C$. So we can assume $\mu(C)<\mu(D)$, which is equivalent to $\Lpara(D)\not\subset C$. 

Let $v_0,v_1,w$ be the extremal generators of $D$ with $v_0,v_1\in C$. Denote by  $v_0,v_1,v_2,\ldots,v_k$ ($k\ge2)$ the extremal generators of the cone 
$$
E=C+\RR_+\Lpara(D)\subset\RR^3.
$$
We assume that the enumeration is done in the cyclic order, i.e., the cones 
\begin{align*}
C_i=\RR_+v_{i-1}+\RR_+v_i\subset\RR^3,\quad i=1,\ldots,k,k+1\mod(k+1)
\end{align*}
are the facets of $E$. (Here, $C=C_{1}$.) 

Because of the containment  $\Hilb(D)\setminus\{w\}\subseteq E$, we have $E<D$ in the poset $\cones(3)$. Further, the cone $E$ is triangulated by the cones 
$$
D_i=\RR_+v_0+\RR_+v_i+\RR_+v_{i+1},\quad i=1,\ldots,k-1.
$$
By Lemma \ref{3-inequality}, we have the inequalities
$$
\mu(D_i)<\mu(D),\quad i=2,\ldots,k.
$$
Then, by the induction hypothesis, we have $C<D_1$ and
$$
(D_{i-1}\cap D_i)<D_i,\quad i=2,\ldots,k-1.
$$
By concatenating, we obtain the following chain in $\cones(3)$:
$$
C<D_1<D_1\cup D_2<\ldots<D_1\cup D_2\cup\ldots\cup D_{k-1}=E<D.
$$
This completes the proof of the basic case.

\medskip The general case easily reduces to the case of a pair of $3$-cones $C\subsetneq D$ with $D=C+\RR_+v$, to which we apply induction on the number of facets of $C$ visible from $v$. When this number is $1$, the inequality $C<D$ results from the basic case. When the number of the visible facets is $k\ge2$ then there is an intermediate cone $C\subsetneq B\subsetneq D$, satisfying the conditions:
\begin{enumerate}[\rm$\centerdot$]
\item $B=C+\RR_+w$ for some $w$;
\item $B$ has only one facet visible from $v$;
\item there are exactly $k-1$ facets of $C$, visible from $w$.
\end{enumerate}
In fact, if $C=\bigcap_{j=1}^lH_j^+$ is the irreducible representation, where the indexing is in the circular order and $H_1\cap C,\ldots,H_k\cap C\subset C$ are the facets visible from $v$, then one can choose 
$$
B=\big(\bigcap_{j=k}^lH_j^+\big)\bigcap D.
$$
We are done because, by the induction hypothesis, $C<B<D$.
\end{proof} 

\section{Diameter}\label{Diameter}

By the \emph{diameter} of a subposet $X\subset\cones(d)$, denoted $\D(X)$, we mean the supremum of the lengths of the shortest sequences $C_0C_1\ldots C_n$ within $X$, connecting any two elements $C_0,C_n$ of $X$, where every two consecutive cones form an elementary extension or descent.

Consider the following subposets of $\cones(d)$:
\begin{enumerate}[\rm(i)]
\item $\cones(d)^{\text o}$, consisting of the $d$-cones in $\RR^d$ (all quantum jumps in $\np(d-1)$ live here);
\item $\unim(d)$, consisting of the unimodular cones in $\RR^d$;
\item $\unim(d)^{\text o}$ consisting of the unimodular $d$-cones  in $\RR^d$.
\end{enumerate}
 
The next theorem implies that $\cones(d)$ and $\cones(d)^{\text o}$ are  both connected.

\begin{theorem}\label{diameter}
We have:
\begin{enumerate}[\rm(a)]
\item $\D(\unim(d))=2d$ for every $d\in\NN$,
\item $\D(\unim(d)^{\emph{\text o}})=O(d^2)$,
\item $\D(\cones(d))=O(d)$,
\item $\D(\cones(d)^{\emph{\text o}})=O(d^2)$.
\end{enumerate}
\end{theorem}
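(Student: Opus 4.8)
The plan is to reduce each diameter bound to a bound on the distance from an arbitrary element to a fixed base point, and then to exhibit explicit sequences of moves. For (a) and (c) the base point is the zero cone $0$; for (b) and (d), where all cones are full-dimensional, it is the standard cone $\RR_+\e_1+\cdots+\RR_+\e_d$. Since $\dist(C,D)\le\dist(C,\text{base})+\dist(\text{base},D)$, it suffices in each case to bound $\dist(C,\text{base})$ uniformly over $C\in X$. A basic fact I will use throughout is that an elementary extension $\Lat(D)=\Lat(C)+\ZZ_+x$ forces $\RR D\subseteq\RR C+\RR x$, so every move changes the dimension by at most $1$.

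For (a) I would first identify $\unim(d)$ with the poset of partial $\ZZ$-bases, i.e.\ linearly independent subsets of $\ZZ^d$ extending to a basis, namely the sets of extremal generators of unimodular cones. A unimodular elementary extension either adds one vector to such a set or, staying in the same dimension, enlarges the cone to a bigger unimodular one containing it (already in $\RR^2$ one has the same-dimensional move $\RR_+\{\e_1,N\e_1+\e_2\}<\RR_+\{\e_1,\e_2\}$). The upper bound $\le 2d$ comes from deleting the $\le d$ extremal generators one at a time down to $0$ (each deletion a Hilbert basis descent) and then building the target up, again one generator at a time. For the matching lower bound I would take $C=\RR_+\e_1+\cdots+\RR_+\e_d$ and $D=-C$; combining the dimension-change-at-most-$1$ property with a potential function measuring, coordinatewise, how far the current cone is from $D$, one sees that two antipodal full-dimensional cones cannot be joined without first descending all the way to $0$, forcing length $\ge 2d$.

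For (c) the key is a dimension-reduction step: any cone of dimension $k\ge 1$ can be joined to a cone of dimension $k-1$ by a bounded number of elementary moves, uniformly in the combinatorial complexity of $C$. Here I would exploit the power of height $1$ extensions, which—unlike Hilbert basis descents—alter $\Lat$ by an entire layer in a single step, together with Proposition \ref{summary}(a): for an extremal generator $v$ of $C$ one has $\Lat(C)+\ZZ v=\Lat(C_0)\times\ZZ v$, so projecting along $v$ collapses $C$ onto a $(k-1)$-dimensional cone whose lattice structure is already present inside $C$. Realizing this collapse by height $1$ extensions to an auxiliary cone followed by a single dimension-dropping descent, in the spirit of Lemma \ref{corner-triangulation}, should cost $O(1)$ moves regardless of how many facets or Hilbert basis elements $C$ has. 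Iterating from $k=\dim C$ down to $0$ then yields $\dist(C,0)=O(d)$.

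Parts (b) and (d) are the full-dimensional versions, where the dimension-reduction shortcut is unavailable, and this is precisely where the extra factor $d$ enters. For (b) I would connect a full-dimensional unimodular cone to the standard one by a Gaussian-elimination-style sequence of flips, each flip a single same-dimensional unimodular elementary extension or descent; since one flip can clear an entry of arbitrary size (as in the $\RR^2$ example above, valid for every $N$), the $O(d^2)$ elementary operations of column reduction translate into $O(d^2)$ moves. For (d) I would first reduce an arbitrary full-dimensional cone to a full-dimensional unimodular one—opening its facets by height $1$ extensions while never dropping dimension—and then invoke (b). The main obstacle throughout is uniformity: proving that the relevant simplification (the dimension drop in (c), the reduction to unimodular in (d)) can always be effected in $O(1)$, resp.\ $O(d)$, moves independently of the number of facets, the size of $\Hilb(C)$, and the normalized volume $\mu(C)$. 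Establishing this—most plausibly through the product decomposition of Proposition \ref{summary}(a), which lets a single move absorb an unbounded amount of lattice data—is the crux of the argument.
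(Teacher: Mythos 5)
Your overall architecture (reduce to the distance from a base point; $0$ for (a),(c), a fixed unimodular $d$-cone for (b),(d)) matches the paper, and your treatment of (a) is essentially the paper's. But there are two genuine gaps. First, in (b): the assertion that a ``Gaussian-elimination-style'' column reduction of a matrix in $SL_d(\ZZ)$ takes $O(d^2)$ elementary operations is false as stated. Over $\ZZ$, clearing an entry requires a Euclidean-algorithm loop whose length depends on the size of the entries, not just on $d$ (already for $d=2$, Fibonacci-type matrices need arbitrarily many elementary column operations). The fact that every element of $SL_d(\ZZ)$ is a product of a number of elementary matrices bounded by a function of $d$ alone (the paper uses $k\le 36+\tfrac12(3d^2-d)$) is the Carter--Keller bounded elementary generation theorem -- a deep result, not a consequence of row reduction. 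Your single-flip example $\RR_+\{\e_1,N\e_1+\e_2\}<\RR_+\{\e_1,\e_2\}$ shows one elementary matrix $e_{ij}^a$ with arbitrary $a$ is one move, which is the easy half; the uniform bound on how many such matrices are needed is the missing, and essential, input.

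Second, in (c) and (d): you correctly identify the crux -- a uniformly bounded number of moves to simplify an arbitrary cone, independent of its number of facets, $\#\Hilb(C)$, and $\mu(C)$ -- but the mechanisms you propose do not deliver it. Proposition \ref{summary}(a) describes $\Lat(C)+\ZZ v$ for an extremal generator $v$, but that set is the lattice of a non-pointed cone, so ``projecting along $v$'' is not realized by any elementary extension or descent; and ``opening facets by height $1$ extensions'' has no a priori bound on the number of steps. The paper's actual device is different and goes \emph{up}, not down: pick a facet $F\subset C$, a $y\in\Lat(C)$ with $\height_F(y)=1$, and an $x\in\Lat(\inte(F))$, and show that for $k\gg0$ the cone $C_k=F+\RR_+(y-kx)$ contains $C$; since $\Lat(C_k)=\Lat(F)+\ZZ_+(y-kx)=\Lat(C)+\ZZ_+(y-kx)$, the inclusion $C<C_k$ is a \emph{single} elementary extension landing in a unimodular extension of $F$ (this is where Proposition \ref{summary}(b), applied to $x\in\inte(F)$, absorbs the unbounded lattice data). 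Iterating on $F$ inside $C_k$ reaches a unimodular $d$-cone in at most $d-1$ moves, all full-dimensional, after which (a) gives (c) and (b) gives (d). Without this (or an equivalent) construction, your $O(1)$-per-dimension-drop claim in (c) and your reduction-to-unimodular claim in (d) remain unproved.
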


\begin{proof} (a) Any unimodular cone can be reached from any other unimodular cone by first removing the Hilbert basis elements of the latter one by one and then adding those of the former, also one at a time. 

For the pairs of unimodular $d$-cones of type $C$ and $-C$ there is no shorter connecting path. One should remark that this is not true for all pairs of unimodular $d$-cones whose intersection is $0$; an example when $d=2$ is
$$
\RR_+\e_1+\RR_+(\e_1+\e_2)<\RR_+(-\e_1+\e_2)+\RR_+\e_1>
\RR_+(-\e_1+\e_2)+\RR_+\e_2.
$$

\medskip\noindent(b) Let $C=\sum_{i=1}^d\ZZ_+v_i$ and $D=\sum_{i=1}^d\ZZ_+w_i$ for two bases $\{v_1,\ldots,v_d\}$ and $\{w_1,\ldots,w_d\}$ of $\ZZ^d$. Put $A=[v_1|\ldots|v_d]$ and $B=[w_1|\ldots|w_d]$. After renumbering of the basis elements, we can assume $\det(A)=\det(B)=1$. The special linear group $SL_d(\ZZ)$ is generated by the elementary matrices $e_{ij}^a$, i.e., the matrices with $1$-s on the main diagonal, at most one non-zero off-diagonal entry $a$ in the $ij$-spot, and $0$-s elsewhere.
Using the equalities $(e_{ij}^a)^{-1}=e_{ij}^{-a}$, there is a representation of the form $Ae_{i_1j_1}^{a_1}\cdots e_{i_kj_k}^{a_k}=B$, where $a_1,\ldots a_k\in\ZZ$. By \cite{Carter}, one can choose $k\le36+\frac12(3d^2-d)$. 
Consider the sequence of unimodular cones:
$$
C_t\ =\ \text{the cone spanned by the columns of}\ 
Ae_{i_1j_1}^{a_1}\cdots e_{i_tj_t}^{a_t},\quad 0\le t\le k.
$$
(In particular, $C_0=C$). Since the multiplications  by elementary matrices from the right corresponds to the elementary column transformations, for every $1\le t\le k$ the inequality $a_t>0$ yields the elementary extension $C_t<C_{t-1}$ and the inequality $a_t<0$ yields the elementary descent $C_t>C_{t-1}$. 

\medskip\noindent(c,d) For $d\le1$ there are connecting paths of length $\le2$. So we assume $d\ge2$.

Pick $C\in\cones(d)$. By taking unimodular extensions as needed, we can assume $\dim C=d$.  We need at most $2d-1$ unimodular extensions to reach the full dimensional case. Consequently, the parts (c) and (d) follow from the parts (a) and (b), respectively, once we show that a unimodular $d$-cone can be reached from $C$ in at most $d-1$ elementary extensions/descents.

Pick arbitrarily a facet $F\subset C$ and two elements $y\in\Lat(C\setminus F)$, satisfying $\height_F(y)=1$, and $x\in\Lat(\inte(F))$, where $\inte(F)$ is the relative interior of $F$. Consider the sequence of cones 
\begin{align*}
C_k=F+\RR_+(y-kx),\qquad k=0,1,\ldots
\end{align*}

We claim that $C\subset C_k$ for all sufficiently large $k$.

Indeed, consider any extremal generator $v$ of $C$. We have $v=\height_F(v)y+v'$ for some $v\rq{}\in\ZZ^d$ with $H_F(v')=0$. By Lemma \ref{summary}(b), $\Lat(F)+\ZZ x=\Lat(\RR F)$. Hence $v'=-sx+z$ for some $z\in\Lat(F)$ and an integer $s\ge0$. Consequently, 
$$
v=\height_F(v)\left(y-\left\lceil \frac{s}{\height_F(v)}\right\rceil x\right)+\height_F(v)\left(1-\left\{\frac{s}{\height_F(v)}\right\}\right)x+z\in C_{\left\lceil\frac{s}{\height_F(v)}\right\rceil}.
$$

Pick $k\gg0$ with $C\subset C_k$. Since $C_k$ is a unimodular extension of $F$, we have the elementary extension $C<C_k$ in $\cones(d)$.

Keeping $\RR_+(y-kx)$ as a 1-face, we may, inductively on dimension, transform $F$ to a unimodular $d-1$-cone using only elementary extensions and descents: one uses the fact that unimodular extensions of cones respect elementary extensions in the previous dimension. In the end, starting from $C$, we have reached a unimodular $d$-cone (in at most $d-1$ steps).
\end{proof}

\begin{remark}\label{uniform} (a) In the proofs of Theorem \ref{diameter}(a,c) one does not need to descent from unimodular cones all the way to $0$. The latter, not being in $\np(d-1)$, may not be desirable. It is enough to descent to $1$-dimensional cones and the same argument as in the proof of Theorem \ref{diameter}(b) shows that for any pair of 1-cones in $\RR^d$ there is an upper bound on the number of connecting elementary extensions/descents: one finds such extensions within the linear span of the pair of 1-cones. By avoiding 0 the diameter goes up by a constant, independent of $d$.

(b) The proof of Theorem \ref{diameter} does \emph{not} imply that $\D(\cones^+(d))<\infty$.
\end{remark}

\section{The space of cones}\label{Topology}

Conjecture \ref{conjecture-additive} has strong consequences for the geometric realization of $\cones(d)$:

\begin{theorem}\label{nice-properties} Assume Conjecture \ref{conjecture-additive} holds for a natural number $d$. Then:
\begin{enumerate}[{\rm(a)}]
\item The spaces $|\cones(d)|$,  $|\cones^+(d)|$, and  $|\cones^+(d)\setminus\{0\}|$ are contractible;
\item $|\cones(d)\setminus\{0\}|$ is a filtered union of spaces, each containing a $(d-1)$-sphere as a strong deformation retract. 
\end{enumerate}
\end{theorem}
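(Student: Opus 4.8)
The plan is to establish the two topological statements as consequences of Conjecture \ref{conjecture-additive}, which turns the abstract order on $\cones(d)$ into the concrete inclusion order. The guiding principle is that the order complex of a poset with a well-behaved ``closure'' or ``contraction'' operation is contractible; the standard tool is Quillen's fiber lemma and the closure-operator criterion (a poset map $f:\Pi\to\Pi$ with $f(x)\ge x$ and $f\circ f=f$ induces a homotopy equivalence $|\Pi|\simeq|f(\Pi)|$, and if $f(\Pi)$ has a maximum then $|\Pi|$ is contractible).

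\medskip\noindent\emph{Part (a).} Assuming the inclusion order, I would first contract $|\cones^+(d)|$ by exhibiting a conical/dilation retraction. For cones in $\big(\RR^{d-1}\times\RR_{>0}\big)\cup\{0\}$, consider the map sending a cone $C$ to $\RR_+(\Delta\times\{1\})$ where $\Delta$ is a fixed large simplex in $\RR^{d-1}$ containing the cross-section of $C$; under the inclusion order this is monotone and, being ``expanding'' to a fixed cone $C_{\max}$, realizes $C_{\max}$ as a cone point. More carefully, one fixes a sufficiently large full-dimensional cone $U\in\cones^+(d)$ and uses the closure operator $C\mapsto C\cup U$ (i.e. $C\mapsto\conv(C\cup U)$, which is again a cone): this is monotone, extensive, and idempotent, so $|\cones^+(d)|\simeq|\{\text{cones}\supseteq U\}|$, which has the minimum $U$ and is therefore contractible. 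The same map restricts to $\cones^+(d)\setminus\{0\}$. For the full $|\cones(d)|$, which contains cones in every halfspace, I would instead use a two-step contraction: first retract onto the cones containing a fixed ray $\RR_+\e_d$ (via $C\mapsto C+\RR_+\e_d$, a closure operator since it only enlarges in the inclusion order and is idempotent), then retract those onto cones containing a fixed $d$-cone, obtaining a cone point.

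\medskip\noindent\emph{Part (b).} For $|\cones(d)\setminus\{0\}|$ the space is not contractible but should carry the homotopy type of a sphere at each finite stage of a natural filtration. The filtration I would use is by the subposets of cones whose Hilbert basis lies in a ball of radius $r$ (or, cleanly, by the $\cones^{(h)}$-type truncations combined with a size bound), so that $|\cones(d)\setminus\{0\}|$ is their filtered union. Within each stage $X_r$, the nonzero cones ordered by inclusion should deformation-retract onto the subposet of \emph{rays} (1-cones) together with their containments — equivalently, onto the order complex of the face poset of a fixed large $d$-polytope's boundary sphere $S^{d-1}$. The key geometric input is that inclusion of cones mirrors the face relation on a sphere of directions: a nonzero pointed cone is determined up to ``shape'' by the spherical polytope it cuts out, and the poset of spherical polytopes under inclusion retracts onto the barycentric subdivision of $S^{d-1}$, which is a $(d-1)$-sphere. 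Making this a \emph{strong deformation retract} compatible with the filtration is the technical heart.

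\medskip\noindent\textbf{The hard part} will be Part (b): producing, at each filtration stage, an explicit strong deformation retraction onto a $(d-1)$-sphere that is simultaneously order-preserving (so that it descends to the order complex) and compatible with the inclusions $X_r\hookrightarrow X_{r+1}$ (so that the sphere sits correctly inside each larger stage). Unlike Part (a), here there is no closure operator to a single maximum; one must track the sphere of directions and argue that the contractible ``radial'' directions collapse while the spherical $(d-1)$-cycle survives. I expect to invoke Quillen's fiber lemma with fibers given by the cones lying ``between'' a given ray and the ambient $d$-cone, checking these fibers are contractible (they are intervals in the inclusion order with a common cone point obtained by the closure operator of Part (a) localized to a halfspace), so that the projection to the ray poset is a homotopy equivalence; the $(d-1)$-sphere then appears as the nerve of the ray poset. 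Verifying contractibility of all fibers uniformly, and that the retraction is strong and filtration-compatible, is where the real work lies.
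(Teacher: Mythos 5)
Your part (a) is mostly sound but contains one broken step, and your part (b) has a genuine conceptual gap.

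On (a): the closure-operator argument $C\mapsto C+U$ for a fixed full-dimensional $U\in\cones^+(d)$ works for $|\cones^+(d)|$ and $|\cones^+(d)\setminus\{0\}|$ (the sum of two pointed cones in $(\RR^{d-1}\times\RR_{>0})\cup\{0\}$ is again such a cone), and is essentially equivalent to the paper's observation that $\cones^+(d)\setminus\{0\}$ is filtering. However, your first step for the full space $|\cones(d)|$, the map $C\mapsto C+\RR_+\e_d$, is not a well-defined self-map of $\cones(d)$: for $C=\RR_+(-\e_d)$ (or any cone meeting the ray $\RR_+(-\e_d)$) the sum contains the line $\RR\e_d$ and is no longer pointed, so it leaves the poset. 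You also missed the one-line argument that makes all of this unnecessary: under Conjecture \ref{conjecture-additive} the order is inclusion, so $0$ is the \emph{smallest} element of both $\cones(d)$ and $\cones^+(d)$, and a poset with a smallest element has a contractible order complex. That is exactly what the paper does.

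On (b), the proposed mechanism does not work. The subposet of rays ($1$-cones) is an antichain under inclusion, so its order complex is a discrete set of points, not a $(d-1)$-sphere; the claimed equivalence with ``the order complex of the face poset of a $d$-polytope's boundary'' is false, and there is no monotone ``projection to the ray poset'' (a cone contains infinitely many rays, none canonical), so Quillen's fiber lemma has nothing to apply to. The correct sphere is the order complex of the \emph{face poset of a polytopal subdivision} $\Pi_i$ of $S^{d-1}$ (its barycentric subdivision), and the paper filters $\cones(d)\setminus\{0\}$ accordingly: the $i$-th stage $\bar\Pi_i$ consists of all spherical polytopes $C\cap S^{d-1}$ that are \emph{subdivided} by $\Pi_i$, and the retraction is an explicit simplex-by-simplex collapse of $|\bar\Pi_i|$ onto $|\Pi_i|\cong S^{d-1}$, built in two steps through an auxiliary poset $\check\Pi_i$ with fewer relations. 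Your filtration by the radius of a ball containing the Hilbert basis lacks the structural property that every cone in a given stage is a union of cells of a fixed subdivision, which is precisely what makes the collapse possible. Since you explicitly defer the construction of the strong deformation retraction, and the route you sketch for it cannot produce a sphere, this part of the proof is missing, not merely incomplete.
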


\begin{proof} (a) The spaces $|\cones(d)|$ and  $|\cones^+(d)|$ are contractible because $0$ is the smallest element of $\cones(d)$ and $\cones^+(d)$. The poset $\cones^+(d)\setminus\{0\}$ is \emph{filtering,} i.e., every finite subset has an upper bound. But the geometric realization of a filtering poset is contractible (\cite[Section 1]{Quillen}). 


\medskip\noindent (b) Let $S^{d-1}$ be the unit $(d-1)$-sphere in $\RR^d$, centered at the origin. Then we can think of the poset of $\cones(d)\setminus\{0\}$ as the poset of intersections $C\cap S^{d-1}$, $C\in\cones(d)$, ordered by inclusion. Abusing terminology, these intersections will be also called polytopes.

For two polytopal subdivisions $\Pi_1$ and $\Pi_2$ of $S^{d-1}$ and a polytope $P\subset S^{d-1}$ we write (i) $\Pi_1\prec\Pi_2$ if $\Pi_2$ is a subdivision of $\Pi_1$ and (ii) $P\prec\Pi_1$ if $P$ is subdivided by  polytopes in $\Pi_1$.

Fix a system of polytopal subdivisions $\{\Pi_i\}_{i=1}^\infty$ of $S^{d-1}$, satisfying the conditions: $\Pi_i\prec\Pi_{i+1}$ for all $i$ and every polytope $P\subset S^{d-1}$ admits $i$ with $P\prec\Pi_i$.

For every index $i$, the simplicial complex $|\Pi_i|$ is a barycentric subdivision of $\Pi_i$. In particular, $|\Pi_i|\cong S^{d-1}$. 

Consider the following posets:
\begin{enumerate}[\rm$\centerdot$]
\item $\check\Pi_i=\{P\in\cones(d)\setminus\{0\}\ |\ P\prec \Pi_i\}$, made into a poset by adding to the inclusion order in $\Pi_i$ the new relations $Q<P$ whenever $P\in\check\Pi_i\setminus\Pi_i$, $Q\in\Pi_i$, $Q\subset P$; in particular, two different polytopes $P,P\rq{}\in\check\Pi_i\setminus\Pi_i$ are not comparable;
\item The subposet $\bar\Pi_i=\{P\in\cones(d)\setminus\{0\}\ |\ P\prec \Pi_i\}\subset\cones(d)$; it has more relations then the poset $\check\Pi_i$, supported by the same set of polytopes: for $P,P\rq{}\in\check\Pi_i\setminus\Pi_i$ one has $P<P\rq{}$ whenever $P\subset P\rq{}$;
\item
The subposets $\Pi_i(P)=\{Q\ |\ Q\in\Pi_i,\ Q\subset P\}\cup\{P\}\subset\cones(d)$ for $P\prec\Pi_i$.
\end{enumerate}

The (geometric) simplicial complex $|\check\Pi_i|$ is obtained from $|\Pi_i|$ by changing the contractible subcomplexes $|\Pi_i(P)|$ to pyramids over them. Any two of these pyramids either do not meet outside $|\Pi_i|$ or overlap along a pyramid from the same family. In particular, the subspace $|\Pi_i|\subset|\check\Pi_i|$ is a strong deformation retract. Let $F:|\check \Pi_i|\times[0,1]\to|\check\Pi_i|$ be a corresponding homotopy. 

Consider an extension of $F$ to a homotopy $G:|\bar\Pi_i|\times[0,1]\to|\bar\Pi_i|$, satisfying the condition: for every $t\in[0,1]$ the map $G_t$ is injective on $|\bar\Pi_i|\setminus|\check\Pi_i|$ and is the identity on $|\Pi_i|$. In more detail, for every chain 
$$
P_0<\ldots<P_k<P_{k+1}<\cdots<P_n,\quad P_k\in\Pi_i,\quad P_{k+1}\in\check\Pi_i\setminus\Pi_i,
$$
and every index $k<l\le n$, the $l$-subsimplex $\triangle(P_0,\ldots,P_k,P_l)$ of the $n$-simplex $\triangle(P_0,\ldots,P_n)$  is collapsed by the homotopy $G$ into the $k$-subsimplex $\triangle(P_0,\ldots,P_k)$, while the rest of the $n$-simplex homeomorphically remains invariant. In particular, $G_1(\triangle(P_1,\ldots,P_n))$ is an $n$-disc, attached to $|\Pi_i|$ along the subdisc $\triangle(P_1,\ldots,P_k)$. Then $\Im G_1$ consists of $|\Pi_i|$ and the mentioned finitely many attached discs, any two of which either do not meet outside $|\Pi_i|$ or overlap along a disc from the same family.

The claim now follows because $|\Pi_i|$ is a strong deformation retract of $\Im G_1$.
\end{proof}

\noindent\emph{Remark.} It is very likely that a more elaborate homotopy leads to a deformation retraction of the total space $|\cones(d)\setminus\{0\}|$ to a $(d-1)$-sphere.

\medskip By Lemma \ref{Inclusions}(c), we have the tower of spaces 
$$
|\np(d-1)|=|\cones^{(1)}(d)\setminus\{0\}|\subset|\cones^{(2)}(d)\setminus\{0\}|\subset\ldots,
$$
which, in view of Theorem \ref{nice-properties}, is expected to trivialize in the limit. This observation can lead to an insight into the more difficult space of normal polytopes if the trivialization occurs in a controlled way -- an interesting question in its own right. In more detail, the group $\Aff_{d-1}(\ZZ)$ of affine automorphisms of $\ZZ^{d-1}$ acts compatibly on the whole tower of posets
$$
\cones^{(1)}(d)\setminus\{0\}\subset\cones^{(2)}(d)\setminus\{0\}\subset\cones^{(3)}(d)\setminus\{0\}\subset\ldots$$
via the embedding 
\begin{align*}
\Aff_{d-1}(\ZZ)\to GL_d(\ZZ),\quad(\alpha|\beta)\mapsto
&\begin{pmatrix}
\alpha&\beta\\
0&1
\end{pmatrix},\\
&\qquad\alpha\in GL_{d-1}(\ZZ),\ \beta\in\ZZ^{d-1}.
\end{align*}

As a result, the homology groups of all involved geometric realizations are modules over the group ring $\ZZ[\Aff_{d-1}(\ZZ)]$.

\begin{question}
Are the relative homology groups
$$
H_i\big(|\cones^{(j)}(d)\setminus\{0\}|,|\cones^{(j-1)}(d)\setminus\{0\}|,\ZZ\big)
$$ 
finitely generated $\ZZ[\Aff_{d-1}(\ZZ)]$-modules for all $i$ and $j$?
\end{question}

The positive answer to this question for $i=0$ (and all $j$), would imply that the still elusive isolated elements in $\np(d-1)$ form a highly structured family: for every $j$, only finitely many such isolated elements (up to unimodular equivalence) cease to be isolated when one passes from $\cones^{(j-1)}(d)\setminus\{0\}$ to $\cones^{(j)}(d)\setminus\{0\}$, and all isolated elements are taken out as $j\to\infty$. 

\bibliographystyle{plain}
\bibliography{bibliography} 

\end{document}